\newcounter{treeline}
\renewcommand{\section}{\@startsection
  {section}%
  {1}%
  {0em}%
  {-\baselineskip}%
  {0.5\baselineskip}%
  {\centering\normalfont\Large\scshape\mdseries}}%
\renewcommand{\subsection}{\@startsection
  {subsection}%
  {2}%
  {0em}%
  {-\baselineskip}%
  {0.5\baselineskip}%
  {\normalfont\large\scshape\mdseries}}%
\renewcommand*\env@matrix[1][c]{\hskip -\arraycolsep
  \let\@ifnextchar\new@ifnextchar
  \array{*\c@MaxMatrixCols #1}}
\newenvironment{theopargself*}
    {\def\@spopargbegintheorem##1##2##3##4##5{\trivlist
         \item[\hskip\labelsep{##4##1\ ##2}]{\hspace*{-\labelsep}##4##3\@thmcounterend}##5}
     \def\@Opargbegintheorem##1##2##3##4{##4\trivlist
         \item[\hskip\labelsep{##3##1}]{\hspace*{-\labelsep}##3##2\@thmcounterend}}}{}
\def \@floatboxreset {%
        \reset@font
        \small
        \@setnobreak
        \@setminipage
}
\def\figure{\@float{figure}}
\def\table{\@float{table}}
\def\fps@figure{htbp}
\def\fps@table{htbp}
\DeclareMathOperator*{\argmax}{\arg\!\max}
\theoremstyle{plain}
\newtheorem{theorem}{Theorem}[section]
\newtheorem{definition}{Definition}[section]
\newtheorem{chara}{Characterization}[section]
\newtheoremstyle{break}
  {9pt}
  {9pt}
  {\itshape}
  {}
  {\bfseries}
  {.}
  {\newline}
  {}
\newtheoremstyle{break1}
  {9pt}
  {9pt}
  {\rmfamily}
  {}
  {\scshape}
  {.}
  {\newline}
  {}
\theoremstyle{break}
\newtheoremstyle{note}
  {3pt}
  {3pt}
  {}
  {}
  {\itshape}
  {:}
  {.5em}
  {\newline}  
  {}
\theoremstyle{note}
\theoremstyle{definition}
\theoremstyle{break1}
\newtheorem{remark}{Remark}[section]
\begin{document}
\bibliographystyle{plainnat} 
\pdfbookmark[0]{Simplifying the Kohlberg Criterion on the Nucleolus: A Correct Approach}{tit}

\title{{Simplifying the Kohlberg Criterion on the Nucleolus: A Correct Approach} 
}
\author{{\bfseries Holger I. MEINHARDT} 
~\thanks{Holger I. Meinhardt, Institute of Operations Research, Karlsruhe Institute of Technology (KIT), Englerstr. 11, Building: 11.40, D-76128 Karlsruhe. E-mail: \href{mailto:Holger.Meinhardt@wiwi.uni-karlsruhe.de}{Holger.Meinhardt@wiwi.uni-karlsruhe.de}} 
}
\maketitle

\begin{abstract}
\citet{Nguyen:16b,Nguyen:17} claimed that he has developed a simplifying set of the Kohlberg criteria that involves checking the balancedness of at most $(n-1)$ sets of coalitions. This claim is not true. Analogous to \citet{Nguyen:16}, he has incorrectly applied the indirect proof by $(\phi \Rightarrow \bot) \Leftrightarrow \neg \phi$. He established in his purported proofs of the main results that a truth implies a falsehood. This is a wrong statement and such a hypotheses must be rejected (cf.~\citet{mei:15,mei:16,mei:16b,mei:17}). Executing a logical correct interpretation ought immediately lead him to the conclusion that his proposed algorithms are deficient. In particular, he had to detect that the imposed balancedness requirement on the test condition $(\cup_{j=1}^{k}\,T_{j})$ within his proposed methods cannot be appropriate. Hence, one cannot expect that one of these algorithms makes a correct selection. The supposed algorithms are wrongly designed and cannot be set in any relation with Kohlberg. Therefore, our objections from~\citet{mei:17} are still valid with some modifications. In particular, he failed to work out modified necessary and sufficient conditions of the nucleolus.\\

\noindent {\bfseries Keywords}: Transferable Utility Game, Nucleolus, Balancedness, Kohlberg Criteria, Propositional Logic, Circular Reasoning (circulus in probando), Indirect Proof, Proof by Contradiction, Proof by Contraposition. \\

\noindent {\bfseries 2010 Mathematics Subject Classifications}: 03B05, 91A12, 91B24  \\
\noindent {\bfseries JEL Classifications}: C71
\end{abstract}


\thispagestyle{empty}

\pagestyle{scrheadings}  \ihead{\empty} \chead{Simplifying the Kohlberg Criterion on the Nucleolus: A Correct Approach} \ohead{\empty}

\section{Introduction}
\label{sec:intodKohlb}
In his recently revised work~\citet{Nguyen:17} would have us believe that only some typos are responsible for erroneous mathematical statements and incorrect applications of propositional logic in~\citet{Nguyen:16b} caused when changing his algorithms from the pre-nucleolus to the nucleolus (cf.~\citet[p.~14]{Nguyen:17}), instead of, a deep misunderstanding of logic and a halting practice of conducting mathematical proofs riddled with wrong statements as well as incorrect manipulations of mathematical objects. In contrast, he supposed that it is enough to invoke on superior authorities while mentioning that his applied indirect proof technique is well-established in the literature (cf.~\citet[p.~14]{Nguyen:17}) and that for these reasons his proofs are correct. Overstating this line of argumentation, it is sufficient that a superior authority tells us that $1+1=3$ is correct in order to apply this result. Science is not easy as that. It is on refutation but certainly not on verification of well-established hypotheses. In addition, it is about to keep a sceptical distance against research subjects rather than loosing scientific distance and objectivity. Moreover, if he was right with his statement that his misguided proof technique was widely applied in the literature, we must have deep concerns on the reliability of the published results.

In detail, we cannot observe any improvements in the revised version of~\citet{Nguyen:17} in comparison to~\citet{Nguyen:16b}. The manuscript is still full of mistakes and logical flawed. The author was even not be able to eliminate all typos from the previous one and added a bunch of new ones. With all goodwill, we cannot find any argument that could be used in favor of the author. We have still to read that a coalition $S$ is in the span of a collection of sets $T$, i.e., $S \in \text{span}(T)$ rather than $\mathbf{1}_{S} \in \text{span}(\{\mathbf{1}_{R}\,\arrowvert\, R \in T\})$. Or that a collection of characteristic vectors of $H_{k-1}$ can be unified with a collection of coalitions $T_{k}$. In addition, we had to find out that Caratheodory Theorem is still incorrectly applied or that obvious misguided logic was not eliminated from the text. However, we notice that the author tried to give a correct definition of the Kohlberg criteria, nevertheless, he failed in the same vein as to give a correct test condition of his supposed algorithms. It is immediately evident that the changed test condition $\cup_{j=1}^{k} T_{j}$ is defective. Thus, the purported algorithms will still make incorrect selections. It ought to be clear that all of this has nothing to do with typos. Hence, our objections presented in~\citet{mei:17} are valid with some modifications. 

Apart from his lack of understanding that the indirect proof by $(\phi \Rightarrow \bot) \Leftrightarrow \neg \phi$ was not applied correctly, we have even to assert that his problem is more fundamental than that, he does not have any understanding what to prove. This can be observed by the fact that he treats the collection of sets $\cup_{j=1}^{k} T_{j}$ derived from an application of the criteria of Kohlberg in the same vein as a collection of sets $\cup_{j=1}^{r} \tilde{T}_{j}$ derived by his simplified approach. For the author both collections are equal. This is obviously not the case. The collection of $\cup_{j=1}^{r} \tilde{T}_{j}$ is a shortening of $\cup_{j=1}^{k} T_{j}$ while eliminating coalitions. In this respect, he failed to prove that only irrelevant coalitions will be deleted from $\cup_{j=1}^{k} T_{j}$, with the consequence that $\cup_{j=1}^{r} \tilde{T}_{j}$ inherits the essential properties of $\cup_{j=1}^{k} T_{j}$. Hence, this caused him to fail to work out a modified Kohlberg characterization, that is, to give modified necessary and sufficient conditions of the nucleolus. Such a result would be a rheme.

The remaining part of this treatise is organized as follows. In Section~\ref{sec:prel} we introduce some basic notations and definitions to investigate the nucleolus as well as the pre-nucleolus. We deviate in our treatise from the notations applied in~\citet{Nguyen:16b,Nguyen:17} which have to be proven inappropriate of presenting a simplifying Kohlberg criterion. Inter alia, this oversimplification of notations had led the author to wrong conclusions and had disguised a possible re-characterization of the (pre-)nucleolus. After having introduced these building blocks, we discuss in Section~\ref{sec:sKohlb} the Kohlberg criteria of the nucleolus as well as of the pre-nucleolus. This allows us to work out of how we have to modify the necessary and sufficient conditions of the pre-nucleolus which are indispensable of getting reliable algorithms for verifying if a pre-imputation is indeed the pre-nucleolus. We focus on the pre-nucleolus to reduce the complexity of our arguments only to its substantial and crucial parts. Notice that we give no proof for this modified Characterization~\ref{thm:modkohlb} of the pre-nucleolus. This characterization illuminates of how we have to restate the pre-nucleolus in comparison to Kohlberg to get a new verification algorithm. Having worked out this re-characterization of the pre-nucleolus, we were able to formulate an alternative and usable but not necessarily reliable Algorithm~\ref{alg:modpnkohl1}. In this respect, we identify some obstacles which probably make it impossible to prove this new characterization. Though we have some evidence that necessity might work, we expect that sufficiency is the more problematical case to prove. A few final remarks close this treatise in Section~\ref{sec:remKohlb}.

\section{Some Preliminaries}
\label{sec:prel}

As we have discussed in the introduction,~\citet{Nguyen:16b,Nguyen:17} confounds the collection of sets obtained by Kohlberg's criteria with those obtained by his simplifying set criteria which is owed among other things by his simplified notational approach. In order to avoid that we misinterpret these sets or that we unify sets of characteristic vectors with that of coalitions, we need to reintroduce a more complex notation and machinery. In the sequel, we rely only on the notation and definitions of~\citet{Nguyen:16b,Nguyen:17} to make clear how deficient the author has handled mathematical objects. However, for the understanding of our arguments this is not really needed. 

A cooperative game with transferable utility is a pair $\langle N,v \rangle $, where $N$ is the non-empty finite player set $N := \{1,2, \ldots, n\}$, and $v$ is the characteristic function $v: 2^{N} \rightarrow \mathbb{R}$ with $v(\emptyset):=0$. A player $i$ is an element of $N$, and a coalition $S$ is an element of the power set of $2^{N}$. The real number $v(S) \in \mathbb{R}$ is called the value or worth of a coalition $S \in 2^{N}$. Let $S$ be a coalition, the number of members in $S$ will be denoted by $s:=|S|$. In addition, we identify a cooperative game by the vector $v := (v(S))_{S \subseteq N} \in \mathcal{G}^{n} = \mathbb{R}^{2^{n}}$, if no confusion can arise. 

If $\mathbf{x} \in \mathbb{R}^{n}$, we apply $x(S) := \sum_{k \in S}\, x_{k}$ for every $S \in 2^{N}$ with $x(\emptyset):=0$. The set of vectors $\mathbf{x} \in \mathbb{R}^{n}$ which satisfies the efficiency principle $v(N) = x(N)$ is called the {\bfseries pre-imputation set} and it is defined by 
\begin{equation} 
  \label{eq:pre-imp}
  \mathcal{I}^{*}(N,v):= \left\{\mathbf{x} \in \mathbb{R}^{n} \;\arrowvert\, x(N) = v(N) \right\},
\end{equation} 
where an element $\mathbf{x} \in \mathcal{I}^{*}(N,v)$ is called a pre-imputation. The set of pre-imputations which satisfies in addition the {\bfseries individual rationality property} $x_{k} \ge v(\{k\})$ for all $k \in N$ is called the {\bfseries imputation set} $\mathcal{I}(N,v)$. 

Given a vector $\mathbf{x} \in \mathcal{I}^{*}(N,v)$, we define the {\bfseries excess} of coalition $S$ with respect to the pre-imputation $\mathbf{x}$ in the game $\langle N,v \rangle $ by 
\begin{equation} 
  \label{eq:exc} 
  e^{v}(S,\mathbf{x}):= v(S) - x(S). 
\end{equation} 
A non-negative (non-positive) excess of $S$ at $\mathbf{x}$ in the game $\langle N,v \rangle $ represents a gain (loss) to the members of the coalition $S$ unless the members of $S$ do not accept the payoff distribution $\mathbf{x}$ by forming their own coalition which guarantees $v(S)$ instead of $x(S)$. 

In order to define the nucleolus $\nu(N,v)$ of a game $v \in \mathcal{G}^{n}$, take any $\mathbf{x} \in \mathcal{I}(N,v)$ to define a $2^{n}-1$-tuple vector $\theta(\mathbf{x})$ whose components are the excesses $e^{v}(S,\mathbf{x})$ of the $2^{n}-1$ non-empty coalitions $\emptyset \neq S \subseteq N$, arranged in decreasing order, that is,
\begin{equation}
 \label{eq:compl_vec}
  \theta_{i}(\mathbf{x}):=e^{v}(S_{i},\mathbf{x}) \ge e^{v}(S_{j},\mathbf{x}) =:\theta_{j}(\mathbf{x}) \qquad\text{if}\qquad 1 \le i \le j \le 2^{n}-1.
\end{equation}
Ordering the so-called complaint or dissatisfaction vectors $\theta(\mathbf{x})$ for all $\mathbf{x} \in \mathcal{I}(N,v)$ by the lexicographic order $\le_{L}$ on $\mathbb{R}^{2^n-1}$, we shall write
\begin{equation}
 \theta(\mathbf{x}) <_{L} \theta(\mathbf{z}) \qquad\text{if}\;\exists\;\text{an integer}\; 1 \le k \le 2^{n}-1,
\end{equation}
such that $\theta_{i}(\mathbf{x}) = \theta_{i}(\mathbf{z})$ for $1 \le i < k$ and $\theta_{k}(\mathbf{x}) < \theta_{k}(\mathbf{z})$. Furthermore, we write $\theta(\mathbf{x}) \le_{L} \theta(\mathbf{z})$ if either $\theta(\mathbf{x}) <_{L} \theta(\mathbf{z})$ or $\theta(\mathbf{x}) = \theta(\mathbf{z})$. Now the nucleolus $\mathcal{N}(N,v)$ of a game $v \in \mathcal{G}^{n}$ over the set $\mathcal{I}(N,v)$ is defined as 
\begin{equation}
 \label{eq:nuc_sol}
  \mathcal{N}(N,v) = \left\{\mathbf{x} \in \mathcal{I}(N,v)\; \arrowvert\; \theta(\mathbf{x}) \le_{L} \theta(\mathbf{z}) \;\forall\; \mathbf{z} \in \mathcal{I}(N,v) \right\}.
\end{equation}
At this set the total complaint $\theta(\mathbf{x})$ is lexicographically minimized over the non-empty compact convex imputation set $\mathcal{I}(N,v)$.~\citet{schm:69} proved that the nucleolus $\mathcal{N}(N,v)$ is non-empty whenever $\mathcal{I}(N,v)$ is non-empty and it consists of a unique point, which is referred to as $\nu(N,v)$. 

Similar, the pre-nucleolus $\mathcal{PrN}(N,v)$ over the pre-imputations set $\mathcal{I}^{*}(N,v)$ is defined by 
\begin{equation}
 \label{eq:prn_sol}
  \mathcal{PrN}(N,v) = \left\{\mathbf{x} \in \mathcal{I}^{*}(N,v)\; \arrowvert\; \theta(\mathbf{x}) \le_{L} \theta(\mathbf{z}) \;\forall\; \mathbf{z} \in \mathcal{I}^{*}(N,v) \right\}.
\end{equation}
The {\bfseries pre-nucleolus} of any game $v \in \mathcal{G}^{n}$ is non-empty as well as unique, and it is referred to as $\nu^{*}(N,v)$. 

\section{Modified Kohlberg Criterion: A Correct Approach}
\label{sec:sKohlb}

In the course of this section we limit ourselves on the investigation of the pre-nucleolus to reduce the complexity of our arguments only to its essential parts. The nucleolus is only discussed to make the links to the~\citet{Nguyen:17} paper as well as to accentuate the differences to the pre-nucleolus. Reducing the complexity of our arguments reveals that the damage of the~\citet{Nguyen:17} results are more fundamental and are not only restricted to misguided logic or notational typos. It is related to a complete ignorance and misunderstanding of what one has to prove. As we shall see in a moment, the author has to give and to establish a modified characterization of Kohlberg's theorem. He failed to give such a characterization, since he treated the collection of sets by his simplifying set approach in the same manner as the collection of sets obtained by Kohlberg's criteria. Thus, he has not proved that his approach only eliminates the irrelevant coalitions from a collection of sets without destroying the original property of a larger collection of coalitions, i.e., changing a balanced collection to an unbalanced and vice versa. Moreover, he also failed to show the reverse argument, i.e., that whenever Kohlberg's properties are satisfied on his simplified collection of sets, then the underlying imputation must be the nucleolus.     

To start with, we need to define the collection of coalitions $\mathcal{D}(N,v;\psi,\mathbf{x})$ having excess larger than or equal to $\psi \in \mathbb{R}$ at the pre-imputation $\mathbf{x} \in \mathcal{I}^{*}(N,v)$. This is accomplished by Definition~\ref{def:dset} which we introduce next. 

\begin{definition}
  \label{def:dset}
For every $\mathbf{x} \in \mathcal{I}^{*}(N,v)$, and $\psi \in \mathbb{R}$ define the set
  \begin{equation}
    \label{eq:dset}
  \mathcal{D}(N,v;\psi,\mathbf{x}) := \left\{S \subseteq N\backslash\{\emptyset\} \,\arrowvert\, e^{v}(S,\mathbf{x}) \ge \psi \right\},
  \end{equation}
\end{definition}
\noindent and let $\mathcal{B}=\{S_{1},\ldots, S_{m}\}$ be a collection of non-empty sets of $N$. We denote the collection $\mathcal{B}$ as balanced whenever there exist positive numbers $w_{S}$ for all $S \in \mathcal{B}$ such that we have $\sum_{S \in \mathcal{B}}\, w_{S}\mathbf{1}_{S} = \mathbf{1}_{N}$. The numbers $w_{S}$ are called weights for the balanced collection $\mathcal{B}$ and $\mathbf{1}_{S}$ is the {\bfseries indicator function} or {\bfseries characteristic vector} $\mathbf{1}_{S}:N \mapsto \{0,1\}$ given by $\mathbf{1}_{S}(k):=1$ if $k \in S$, otherwise $\mathbf{1}_{S}(k):=0$.

A characterization of the pre-nucleolus in terms of balanced collections is due to~\citet{kohlb:71}.

\begin{theorem}
  \label{thm:kohlb}
 Let $\langle\, N, v\, \rangle$ be a TU-game and let be $\mathbf{x} \in \mathcal{I}^{*}(N,v)$. Then $\mathbf{x} = \nu^{*}(N,v) $ if, and only if, for every $\psi \in \mathbb{R}, \mathcal{D}(N,v;\psi,\mathbf{x})\neq \emptyset$ implies that $\mathcal{D}(N,v;\psi,\mathbf{x})$ is a balanced collection over N. 
\end{theorem}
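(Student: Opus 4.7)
I prove the two implications separately. Enumerate the distinct excess values of $\mathbf{x}$ as $\psi_{1}>\psi_{2}>\cdots>\psi_{r}$, let $L_{k}:=\{\,S\subseteq N\setminus\{\emptyset\}\,:\,e^{v}(S,\mathbf{x})=\psi_{k}\,\}$, and write $\mathcal{D}_{k}:=L_{1}\cup\cdots\cup L_{k}=\mathcal{D}(N,v;\psi_{k},\mathbf{x})$. Since $\mathcal{D}(N,v;\psi,\mathbf{x})$ is constant in $\psi$ between consecutive $\psi_{k}$'s, it suffices to check balancedness of each $\mathcal{D}_{k}$. For $\mathbf{z}\in\mathcal{I}^{*}(N,v)$ and $\mathbf{y}:=\mathbf{z}-\mathbf{x}$ one has $y(N)=0$ and $e^{v}(S,\mathbf{z})=e^{v}(S,\mathbf{x})-y(S)$, which reduces both directions to a question about directions in the hyperplane $H:=\{\,\mathbf{y}\in\mathbb{R}^{n}\,:\,y(N)=0\,\}$.

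For sufficiency I assume every $\mathcal{D}_{k}$ is balanced and suppose toward a contradiction that $\theta(\mathbf{z})<_{L}\theta(\mathbf{x})$ for some $\mathbf{z}\in\mathcal{I}^{*}(N,v)$. I proceed inductively on $k$ to show $y(S)=0$ for all $S\in L_{k}$. At $k=1$ the lex inequality forces $e^{v}(S,\mathbf{z})\le\psi_{1}$ for every $S$, hence $y(S)\ge 0$ on $L_{1}$; balancedness supplies strictly positive weights $w_{S}$ with $\sum_{S\in L_{1}}w_{S}\mathbf{1}_{S}=\mathbf{1}_{N}$, giving $\sum_{S\in L_{1}}w_{S}y(S)=y(N)=0$, so non-negativity of each term forces $y(S)=0$ throughout $L_{1}$; moreover no fresh coalition can reach excess $\psi_{1}$ at $\mathbf{z}$, else the sorted profile would be lex-larger rather than smaller. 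The inductive step repeats the calculation, using $y(S)=0$ on $\mathcal{D}_{k-1}$ together with balancedness of $\mathcal{D}_{k}$. Since each singleton eventually lies in some $L_{k}$, this forces $\mathbf{y}=\mathbf{0}$, contradicting $\theta(\mathbf{z})<_{L}\theta(\mathbf{x})$.

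For necessity I argue the contrapositive. Let $k^{*}$ be the smallest index with $\mathcal{D}_{k^{*}}$ not balanced. The technical tool is a Farkas-type alternative: a non-empty collection $\mathcal{B}$ of non-empty subsets of $N$ is balanced if and only if there is no $\mathbf{y}\in H$ with $y(S)\ge 0$ for every $S\in\mathcal{B}$ and $y(S_{0})>0$ for some $S_{0}\in\mathcal{B}$. Applied to $\mathcal{D}_{k^{*}}$ this delivers such a direction $\mathbf{y}$; the balancedness of $\mathcal{D}_{k^{*}-1}$ (already guaranteed by the minimality of $k^{*}$) then allows me to project $\mathbf{y}$ onto $\{\,\mathbf{y}\in H\,:\,y(S)=0\ \text{for all}\ S\in\mathcal{D}_{k^{*}-1}\,\}$ while preserving strict positivity at some $S_{0}\in L_{k^{*}}$. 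Setting $\mathbf{z}(t):=\mathbf{x}+t\mathbf{y}$ for small $t>0$, the excesses on $\mathcal{D}_{k^{*}-1}$ are unchanged, those on $L_{k^{*}}$ weakly decrease (strict at $S_{0}$), and those of $S\notin\mathcal{D}_{k^{*}}$ remain below $\psi_{k^{*}}$ by continuity; tracking the sorted vector gives $\theta(\mathbf{z}(t))<_{L}\theta(\mathbf{x})$, contradicting $\mathbf{x}=\nu^{*}(N,v)$.

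The hard part will be the Farkas-type alternative underpinning necessity and its refinement against the already-balanced earlier levels. Stating and proving the alternative demands a closed-cone separation argument carefully phrased with strict weights, and the potential presence of $N$ in $\mathcal{D}_{k^{*}}$ (where $y(N)=0$ is automatic, so $N$ plays no role in the direction) requires separating $N$ off the balancedness question so the refinement step is well-posed. Once this infrastructure is in place, the level-by-level propagation in both directions is essentially routine; the technical crux is the alternative theorem together with the inductive decomposition of $\mathbf{y}$ across the excess levels.
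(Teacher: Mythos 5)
Your proof is correct and follows the standard level-by-level argument that the paper itself does not reproduce --- Theorem~\ref{thm:kohlb} is dispatched with a one-line citation to Maschler--Solan--Zamir --- and your necessity step is the same perturbation $\mathbf{z}(t)=\mathbf{x}+t\mathbf{y}$ that the paper spells out explicitly in its direct proof of Theorem~\ref{thm:propI}. One small simplification: no projection is needed in your necessity step, since any $\mathbf{y}$ produced by the Farkas alternative on $\mathcal{D}_{k^{*}}$ already satisfies $y(S)\ge 0$ on the balanced subcollection $\mathcal{D}_{k^{*}-1}$ and is therefore forced to vanish there by the weighted-sum identity, so the witness $S_{0}$ automatically lies in $L_{k^{*}}$.
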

\begin{proof}
  For a proof see~\citet[pp. 817-821]{msz:13}.
\end{proof}

Notice that balancedness is equivalent to following two conditions:

\begin{definition}
  \label{def:koPr1}
  A vector $\mathbf{x} \in \mathcal{I}^{*}(N,v)$ has Property I w.r.t. to TU-game $\langle\,N, v\,\rangle$, if for all $\psi \in \mathbb{R}$ s.t. $\mathcal{D}(N,v;\psi,\mathbf{x}) \neq \emptyset$,
  \begin{equation*}
    Y(\mathcal{D}(N,v;\psi,\mathbf{x}))=\{\mathbf{y} \in \mathbb{R}^{n} \,\arrowvert\, y(S) \ge 0 \;\forall S \in \mathcal{D}(N,v;\psi,\mathbf{x}), y(N) = 0\},
  \end{equation*}
 implies $y(S) = 0, \forall S \in \mathcal{D}(N,v;\psi,\mathbf{x})$. 
\end{definition}

Whereas the definition of Property II states that

\begin{definition}
 \label{def:koPr2}
  A vector $\mathbf{x} \in \mathcal{I}^{*}(N,v)$ has Property II w.r.t. to TU-game $\langle\,N, v\,\rangle$, if for all $\psi \in \mathbb{R}$ s.t. $\mathcal{C}:=\mathcal{D}(N,v;\psi,\mathbf{x}) \neq \emptyset$, there exists $\mathbf{w} \in 2^{|\mathcal{C}|}$ with $\mathbf{w} \ge \mathbf{0}$ s.t. 
  \begin{equation*}
    \mathbf{1}_{N} = \sum_{S \in \mathcal{C}} w_{S}\mathbf{1}_{S} \qquad\text{and}\qquad w_{S} > 0 \quad\forall\; S \in \mathcal{C}.
  \end{equation*}
\end{definition}

This gives us two alternative characterizations of the pre-nucleolus. This first makes use of Property I and states:

\begin{theorem}
  \label{thm:propI}
Let $\langle\, N, v\, \rangle$ be a TU-game and let be $\mathbf{x} \in \mathcal{I}^{*}(N,v)$. Then $\mathbf{x} = \nu^{*}(N,v) $ if, and only if, $\mathbf{x}$ has Property I.
\end{theorem}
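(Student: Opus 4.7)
The plan is to reduce Theorem~\ref{thm:propI} to Theorem~\ref{thm:kohlb} by establishing a pointwise equivalence: for every fixed non-empty collection $\mathcal{C} = \mathcal{D}(N,v;\psi,\mathbf{x})$, the collection $\mathcal{C}$ is balanced over $N$ if and only if the implication in Property~I holds on $\mathcal{C}$. Because the universal quantifier over $\psi \in \mathbb{R}$ appears identically in both characterizations, this pointwise equivalence yields the theorem immediately from Kohlberg.

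The direction ``balancedness implies Property~I'' I would dispatch in a single line. Given strictly positive weights $w_{S}$ with $\sum_{S \in \mathcal{C}} w_{S}\,\mathbf{1}_{S} = \mathbf{1}_{N}$, and any $y$ satisfying the hypotheses of Property~I, the identity
\[
0 \;=\; y(N) \;=\; y \cdot \mathbf{1}_{N} \;=\; \sum_{S \in \mathcal{C}} w_{S}\, y(S)
\]
expresses $0$ as a sum of non-negative terms, so each $y(S)$ must vanish.

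The converse is the substantive step and I would prove it by Farkas' lemma together with an averaging argument. Fix $S_{0} \in \mathcal{C}$. Property~I forbids any $y$ with $y(N) = 0$, $y(S) \geq 0$ for every $S \in \mathcal{C}$, and $y(S_{0}) > 0$, because the conclusion of Property~I would force $y(S_{0}) = 0$. Equivalently, the system $\{y(N)=0,\ y(S) \geq 0\ \forall S \in \mathcal{C}\}$ logically entails $y(S_{0}) \leq 0$. Farkas' lemma in implication form then yields non-negative coefficients $\gamma_{S} \geq 0$ ($S \in \mathcal{C}$) and a free-sign scalar $\mu \in \mathbb{R}$ with
\[
\mu\,\mathbf{1}_{N} \;=\; \mathbf{1}_{S_{0}} \;+\; \sum_{S \in \mathcal{C}} \gamma_{S}\, \mathbf{1}_{S}.
\]
The right-hand side is pointwise non-negative and non-zero (since $S_{0}\neq\emptyset$), which forces $\mu > 0$; dividing through expresses $\mathbf{1}_{N}$ as a non-negative combination of $\{\mathbf{1}_{S} : S \in \mathcal{C}\}$ in which the coefficient of $\mathbf{1}_{S_{0}}$ is strictly positive. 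Running the construction once per $S_{0} \in \mathcal{C}$ and averaging the $|\mathcal{C}|$ resulting representations gives a single representation in which every coalition carries strictly positive weight, which is exactly the balancedness of $\mathcal{C}$.

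The main obstacle I anticipate is precisely this upgrade from ``weights $\geq 0$'' (which one Farkas invocation already supplies) to ``weights $> 0$ on every coalition''; the averaging device avoids a finer complementary-slackness argument at the cost of one Farkas application per element of $\mathcal{C}$. A subtle accompanying point is the sign analysis of $\mu$: the affine constraint $y(N)=0$ contributes a free-sign multiplier, and extracting $\mu > 0$ rather than the degenerate $\mu = 0$ relies on the non-zero non-negativity of $\mathbf{1}_{S_{0}}$. Without this sign-chase, the scaling step that produces the strictly positive coefficient on $S_{0}$ would collapse, so it is the one place where I would write the argument with care.
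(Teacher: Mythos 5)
Your proposal is correct, but it takes a genuinely different route from the paper. The paper proves only the necessity direction, and does so by a direct variational argument: given $\mathbf{y}\in Y(\mathcal{D}(N,v;\psi,\mathbf{x}))$ it forms the perturbed pre-imputation $\mathbf{z}_{\delta^{*}}=\mathbf{x}+\delta^{*}\mathbf{y}$ for small $\delta^{*}>0$, shows $\theta(\mathbf{z}_{\delta^{*}})\le_{L}\theta(\mathbf{x})$, and then invokes lexicographic minimality together with the uniqueness of the pre-nucleolus to force $\mathbf{x}=\mathbf{z}_{\delta^{*}}$ and hence $y(S)=0$ (with a contrapositive variant that extracts a strict improvement $\theta(\mathbf{z}_{\delta^{*}})<_{L}\theta(\mathbf{x})$ when some $y(S)>0$); it never touches sufficiency and never uses Theorem~\ref{thm:kohlb}. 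You instead treat Theorem~\ref{thm:kohlb} as a black box and prove the pointwise equivalence, for each fixed $\mathcal{C}=\mathcal{D}(N,v;\psi,\mathbf{x})$, between balancedness and the Property~I implication --- the easy direction by pairing $y$ with the balancing weights, the hard direction by one Farkas application per $S_{0}\in\mathcal{C}$ (with the correct sign-chase on the free multiplier of the constraint $y(N)=0$) followed by averaging to make every weight strictly positive. Your route buys the full ``if and only if'' in one stroke and supplies a proof of the paper's unproved side remark that balancedness is equivalent to Properties~I and~II; the paper's route buys self-containedness (no appeal to Kohlberg's theorem) and constructiveness, since it exhibits the improving direction $\mathbf{y}$ whenever Property~I fails. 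The only caveat is that your argument is exactly as strong as Theorem~\ref{thm:kohlb}, which the paper itself only cites, so nothing is circular but nothing is re-derived either.
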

Whereas the second is based on Property II, which is given by:

\begin{theorem}
  \label{thm:propII}
Let $\langle\, N, v\, \rangle$ be a TU-game and let be $\mathbf{x} \in \mathcal{I}^{*}(N,v)$. Then $\mathbf{x} = \nu^{*}(N,v) $ if, and only if, $\mathbf{x}$ has Property II.
\end{theorem}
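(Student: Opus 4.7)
The plan is to derive Theorem~\ref{thm:propII} as a direct restatement of Theorem~\ref{thm:kohlb}. At bottom, Property~II is nothing more than the paper's definition of balancedness transcribed with $\mathcal{C} = \mathcal{D}(N,v;\psi,\mathbf{x})$ playing the role of the generic collection $\mathcal{B}$. Once this is made explicit, the Kohlberg characterization yields the result without any further machinery, so no Farkas-type lemma or linear-programming duality is needed here, all of that weight being carried by Theorem~\ref{thm:kohlb} itself.

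First I would fix a pre-imputation $\mathbf{x} \in \mathcal{I}^{*}(N,v)$ and, for each $\psi \in \mathbb{R}$ for which $\mathcal{C} := \mathcal{D}(N,v;\psi,\mathbf{x})$ is nonempty, compare the two statements side by side. Balancedness of $\mathcal{C}$ demands strictly positive weights $w_{S} > 0$, $S \in \mathcal{C}$, with $\sum_{S \in \mathcal{C}} w_{S}\mathbf{1}_{S} = \mathbf{1}_{N}$; Property~II, applied at the same $\psi$, demands exactly the same thing. Hence the statement ``$\mathbf{x}$ has Property~II'' is verbatim ``for every $\psi \in \mathbb{R}$, $\mathcal{D}(N,v;\psi,\mathbf{x}) \neq \emptyset$ implies that $\mathcal{D}(N,v;\psi,\mathbf{x})$ is a balanced collection over $N$.''

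Second, I would invoke Theorem~\ref{thm:kohlb} to replace the right-hand side of this synonymy by ``$\mathbf{x} = \nu^{*}(N,v)$'', producing the desired biconditional. Because the argument is a pure reformulation rather than a fresh derivation, the proof is essentially complete at this point.

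The one place where care is warranted, and which I regard as the only potential obstacle, is the strict positivity clause $w_{S} > 0$ for every $S \in \mathcal{C}$. Some presentations of Kohlberg's theorem in the literature only produce a balanced subcollection, i.e.\ non-negative weights on $\mathcal{C}$ that are strictly positive on some subset; the paper, however, adopts the strict formulation of balancedness throughout, and Theorem~\ref{thm:kohlb} is quoted in that form. Provided the proof given in~\citet{msz:13} indeed delivers strictly positive weights on every $S \in \mathcal{D}(N,v;\psi,\mathbf{x})$, the reduction above is complete. If this needs further verification, one removes from $\mathcal{C}$ any set that appears only with a zero weight in every admissible representation and argues that such a set cannot in fact lie at the excess level $\psi$ under discussion, a minor bookkeeping step rather than a substantive difficulty.
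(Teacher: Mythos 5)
Your proposal is correct and matches the paper's (implicit) treatment: the paper offers no separate proof of Theorem~\ref{thm:propII}, prefacing Definitions~\ref{def:koPr1} and~\ref{def:koPr2} only with the remark that ``balancedness is equivalent to the following two conditions,'' which is precisely your observation that Property~II is a verbatim transcription of the balancedness condition in Theorem~\ref{thm:kohlb} applied to each nonempty $\mathcal{D}(N,v;\psi,\mathbf{x})$. Your caveat about strict positivity of the weights is well taken but concerns only the form in which Theorem~\ref{thm:kohlb} is quoted, and since the paper states that theorem with strictly positive weights, the reduction goes through as you describe.
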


According to incorrect proofs of necessity related to Theorem~\ref{thm:propI} found in the literature based on misguided logic, cf.~for instance,~\citet[Them.~5.2.6~on~pp.~87-88]{pel_sud:07},~\citet[Them.~19.5~on pp.~275-276]{pet:08},~or~\citet[Them.~19.4~on p.~348]{pet:15}, we give now correct proofs by direct arguments and alternatively by contraposition. Notice in this respect that it is every time preferable to conduct a direct proof but rather an indirect proof. In particular, under the aspect that the former is constructive whereas the latter is not.

\begin{proof}[Direct Proof to Theorem~\ref{thm:propI}:]
Let us assume $\mathbf{x} = \nu^{*}(N,v)$, and let $\psi \in \mathbb{R}$ s.t. $\mathcal{D}(N,v;\psi,\mathbf{x}) \neq \emptyset$ with $Y(\mathcal{D}(N,v;\psi,\mathbf{x}))=\{\mathbf{y} \in \mathbb{R}^{n} \,\arrowvert\, y(S) \ge 0 \;\forall S \in \mathcal{D}(N,v;\psi,\mathbf{x}), y(N) = 0\}$. Define next $\mathbf{z}_{\delta} :=\mathbf{x} + \delta\,\mathbf{y}$ for every $\delta >0$. Since $y(N)=0$, it follows that $\mathbf{z}_{\delta} \in \mathcal{I}^{*}(N,v)$. Choose $\delta^{*}>0$ to be small enough for the inequality 
  \begin{equation}
    \label{eq:pn_exceq0}
  e^{v}(S,\mathbf{z}_{\delta^{*}}) > e^{v}(T,\mathbf{z}_{\delta^{*}}),   
  \end{equation}
to satisfy for all $S \in \mathcal{D}(N,v;\psi,\mathbf{x})$ and all $T \in 2^{N}\backslash\mathcal{D}(N,v;\psi,\mathbf{x})$. Now for every $S \in \mathcal{D}(N,v;\psi,\mathbf{x})$, we obtain
  \begin{equation}
   \label{pn_exceq}
    \begin{split}
    e^{v}(S,\mathbf{z}_{\delta^{*}}) & = v(S) - \big(x(S)+\delta^{*}\,y(S) \big) \\
    & = e^{v}(S,\mathbf{x}) - \delta^{*}\,y(S) \le e^{v}(S,\mathbf{x}).
     \end{split}
  \end{equation}
Thus, from~\eqref{eq:pn_exceq0} and~\eqref{pn_exceq} we have $\theta(\mathbf{z}_{\delta^{*}}) \le_{L} \theta(\mathbf{x})$. In addition, by assumption $\mathbf{x} = \nu^{*}(N,v)$ we also have $\theta(\mathbf{x}) \le_{L} \theta(\mathbf{z}_{\delta^{*}})$. Thus, we get $\theta(\mathbf{x}) =_{L} \theta(\mathbf{z}_{\delta^{*}})$ and by the uniqueness of the pre-nucleolus, we attain $\mathbf{x} = \mathbf{z}_{\delta^{*}}$. This implies $y(S) = 0$ for all $S \in \mathcal{D}(N,v;\psi,\mathbf{x})$. 
\end{proof}

Next we present an alternative proof for Theorem~\ref{thm:propI} by contraposition, that is, we show that if $\neg B$, then $\neg A$ holds. In order to recognize the differences between a direct proof and a proof by contraposition, we restate the arguments in common, and set the changed ones in italic.

\begin{proof}[Proof by Contraposition to Theorem~\ref{thm:propI}:]
For running a proof by contraposition, we have to establish that whenever $\mathbf{x}$ has not Property I, then $\mathbf{x}$ is not the pre-nucleolus, which is equivalent to prove that whenever $\mathbf{x}$ is the pre-nucleolus, then $\mathbf{x}$ has Property I.

For doing so, let $\mathbf{x} \in \mathcal{I}^{*}(N,v)$ and {\itshape assume that $\mathbf{x}$ has not Property I}, i.e., for $\psi \in \mathbb{R}$ such that $\mathcal{D}(N,v;\psi,\mathbf{x}) \neq \emptyset$ with $Y(\mathcal{D}(N,v;\psi,\mathbf{x}))=\{\mathbf{y} \in \mathbb{R}^{n} \,\arrowvert\, y(S) \ge 0 \;\forall S \in \mathcal{D}(N,v;\psi,\mathbf{x}), y(N) = 0\}$ {\itshape there exists $y(S)>0$ for some $S \in \mathcal{D}(N,v;\psi,\mathbf{x})$}. Define next $\mathbf{z}_{\delta} :=\mathbf{x} + \delta\,\mathbf{y}$ for every $\delta >0$. Since $y(N)=0$, it follows that $\mathbf{z}_{\delta} \in \mathcal{I}^{*}(N,v)$. Choose $\delta^{*}>0$ to be small enough for the inequality 
  \begin{equation*}
  e^{v}(S,\mathbf{z}_{\delta^{*}}) > e^{v}(T,\mathbf{z}_{\delta^{*}}),  \tag{\ref{eq:pn_exceq0}}
  \end{equation*}
to satisfy for all $S \in \mathcal{D}(N,v;\psi,\mathbf{x})$ and all $T \in 2^{N}\backslash\mathcal{D}(N,v;\psi,\mathbf{x})$. Now for every $S \in \mathcal{D}(N,v;\psi,\mathbf{x})$, we obtain
    \begin{gather*}
    e^{v}(S,\mathbf{z}_{\delta^{*}})  = v(S) - \big(x(S)+\delta^{*}\,y(S) \big) \\
     = e^{v}(S,\mathbf{x}) - \delta^{*}\,y(S) \le e^{v}(S,\mathbf{x}). \tag{\ref{pn_exceq}}
     \end{gather*}
{\itshape By assumption we have $y(S)>0$ for some $S \in \mathcal{D}(N,v;\psi,\mathbf{x})$}. Thus, from~\eqref{eq:pn_exceq0} and~\eqref{pn_exceq} we have $\theta(\mathbf{z}_{\delta^{*}}) <_{L} \theta(\mathbf{x})$. Hence, $\mathbf{x} \not= \nu^{*}(N,v)$ as required. This concludes the proof. 
\end{proof}

While comparing our second alternative proof with the logical flawed ones given by~\citet{pel_sud:07,pet:08,pet:15}, we realize that these authors confound a proof by contraposition with a proof by contradiction. In addition, notice that from a false conclusion, we can either get a false or a correct implication. For the proofs found in~\citet{pel_sud:07,pet:08,pet:15} the latter holds. This is caused by the fact that the imposition of the assumption that $\mathbf{x}$ is the pre-nucleolus has no effect here, and therefore the elaboration of a contradiction is needless too. In fact, their proofs run through as a proof by contraposition even though the authors conducted it as a proof by contradiction. Despite a wrong application of the indirect proof they are able to come up with a correct result. This illusion will not work out in any case, see for instance~\citet{mei:16b}.

\begin{remark}
The problem with the purported proofs given by these authors is that they conduct the indirect proof
\begin{equation}
  \label{eq:logindp}
(\phi \Rightarrow \bot) \Leftrightarrow \neg \phi,
\end{equation}
as if it is a deduction rather than an implication, where $\phi:= (A \land \neg B)$ as well as $(\neg \phi):=(A \Rightarrow B)$. Hence, they argue the premise $(A \land \neg B)$ is satisfied, from which they deduce that $(\neg A)$ holds, i.e., $\mathbf{x}$ is not the pre-nucleolus, a desired contraction occurs. In order to finally conclude that the premise $(A \land \neg B)$ is false, and therefore $(A \Rightarrow B)$ is fulfilled. Thus, the necessity hypotheses of Theorem~\ref{thm:propI} is confirmed due to a deduction.  

However, they overlooked that $(\phi \Rightarrow \bot)$ is an implication and not a deduction due to the equivalence to $(A \Rightarrow B)$. By the implication $(\phi \Rightarrow \bot)$ one has to infer that a truth implies a falsehood. To see this, note that they assumed the premise $\phi$ is true, then $\neg \phi$ is false, and by the equivalence of~\eqref{eq:logindp} the statement $(\phi \Rightarrow \bot)$ is a wrong statement.\footnote{Notice that in~\eqref{eq:logindp} we have only one degree of freedom.} Therefore, they have in reality established that the necessity hypotheses of~Theorem~\ref{thm:propI} must be denied and not accepted with the consequence that one cannot conclude that the initial premises $(A \land \neg B)$ must be rejected and instead of that $\neg(A \land \neg B) \equiv (\neg A \lor B) \equiv (A \Rightarrow B)$ must hold. Moreover, notice that a conclusion must be obtained within a logical closed and consistent system. In contrast, by the premises $(A \land \neg B)$ one tries to extend the logical system though the contradiction $(\neg A)$ occurs outside the logical consistent system. This implies that one has only established that the premises are inconsistent.

In summary, these authors establish that the assumed pre-nucleolus $\mathbf{x}$ is due to the imposed second assumption and by the construction of vector $\mathbf{z}_{\delta}$ not the pre-nucleolus. The system of premises is inconsistent. As a consequence, the premise that $\mathbf{x}$ is the pre-nucleolus must be denied. In contrast, they conclude in the next step that despite this bad selection of the pre-nucleolus, the hypotheses must be satisfied. This is of course a fallacy. It ought to be obvious that if one makes a bad selection for a premise, a hypotheses must be rejected and cannot be accepted. These authors have not shown what they have intended to prove caused by an incorrect application of the indirect proof by $(\phi \Rightarrow \bot) \Leftrightarrow \neg \phi$.\footnote{Based on private communication with Hans Peters.} \hfill$\Diamond$
\end{remark}

Unfortunately, this and the misuse of the proof by contradiction, for instance, in the purported proof of Theorem 16.12 in~\citet{pet:15} or Theorem 16.11 in~\citet{pet:08}\footnote{Actual,~\citet{pet:08,pet:15} have again confounded a proof by contraposition with a proof by contradiction.} give some evidence to the statement of~\citet[p.~14]{Nguyen:17} that this invalid proof technique is widely applied in the literature. Our deep concerns are not causeless.

We return from our short logic detour while introducing an implementation of Theorem~\ref{thm:kohlb} by Algorithm~\ref{alg:pnkohl1} to verify that a pre-imputation is the pre-nucleolus.

\footnotesize
\begin{algorithm} 
\DontPrintSemicolon
\SetAlgoLined
\KwData{Game $\langle\,N, v\,\rangle$, pre-imputation $\mathbf{x} \in \mathcal{I}^{*}(N,v)$.}
\KwResult{Conclude if $\mathbf{x}$ is the pre-nucleolus.}
\Begin{
  \nlset{1} Initialization: Choose pre-imputation $\mathbf{x}$,\; find $\psi_{0} = max_{S \in 2^{N}\backslash\{\emptyset\}}\,\{v(S)-x(S)\}$ to get $ \mathcal{D}_{0}=\mathcal{D}(N,v;\psi_{0},\mathbf{x})$,\; and set $k=1$.\;
 \While{$rank(\{\mathbf{1}_{S}\,\arrowvert\, S \in \mathcal{D}_{k-1}\})<n$}{
  \If{$\mathcal{D}_{k-1}$ is a balanced collection over $N$}{\nlset{2} Find $\psi_{k} = max_{S \in 2^{N}\backslash\mathcal{D}_{k-1}}\,\{v(S)-x(S)\}$;\; Set $\mathcal{D}_{k} = \mathcal{D}(N,v;\psi_{k},\mathbf{x}),\, k=k+1$ and continue;\; \lElse{\;\nlset{3} Stop the algorithm and conclude that $\mathbf{x}$ is not the pre-nucleolus.}}
        }
 }{\nlset{4} Conclude that $\mathbf{x}$ is the pre-nucleolus.}
 \caption{~Algorithm for verifying if a pre-imputation is the pre-nucleolus}
 \label{alg:pnkohl1}
 \end{algorithm}
\normalsize

Now let us turn to the nucleolus to see how we have to modify these properties. For this purpose, denote by $\mathcal{C}_{0}:= \{\{i\} \;\arrowvert\; i \in N\}$ the collection of coalitions containing only one player. Then we can introduce the criteria of Kohlberg by

\begin{definition}
  \label{def:nckoPr1}
  A vector $\mathbf{x} \in \mathcal{I}(N,v)$ has Property I w.r.t. to TU-game $\langle\,N, v\,\rangle$, if for all $\psi \in \mathbb{R}$ s.t. $\mathcal{D}(N,v;\psi,\mathbf{x}) \neq \emptyset$ with $\mathcal{C}(\psi):=\mathcal{C}_{0} \cup \mathcal{D}(N,v;\psi,\mathbf{x})$,
  \begin{equation*}
    Y(\mathcal{C}(\psi))=\{\mathbf{y} \in \mathbb{R}^{n} \,\arrowvert\, y(S) \ge 0 \;\forall S \in \mathcal{C}(\psi), y(N) = 0\},
  \end{equation*}
 implies $y(S) = 0, \forall S \in \mathcal{D}(N,v;\psi,\mathbf{x})$.
\end{definition}

Similar, we denote the collection $\mathcal{B}$ as weakly balanced whenever there exist non-negative weights $w_{S}$ for all $S \in \mathcal{B}$ such that we have $\sum_{S \in \mathcal{B}}\, w_{S}\mathbf{1}_{S} = \mathbf{1}_{N}$. Then we can state the definition of Property II as 

\begin{definition}
 \label{def:nckoPr2}
  A vector $\mathbf{x} \in \mathcal{I}(N,v)$ has Property II w.r.t. to TU-game $\langle\,N, v\,\rangle$, if for all $\psi \in \mathbb{R}$ s.t. $\mathcal{D}(N,v;\psi,\mathbf{x}) \neq \emptyset$ with $\mathcal{C}(\psi):=\mathcal{C}_{0} \cup \mathcal{D}(N,v;\psi,\mathbf{x})$, there exists $\mathbf{w} \in 2^{|\mathcal{C}(\psi)|}$ with $\mathbf{w} \ge \mathbf{0}$ s.t. 
  \begin{equation*}
    \mathbf{1}_{N} = \sum_{S \in \mathcal{C}(\psi)} w_{S}\mathbf{1}_{S} \qquad\text{and}\qquad w_{S} > 0 \quad\forall\; S \in \mathcal{D}(N,v;\psi,\mathbf{x}).
  \end{equation*}
\end{definition}

This gives us two alternative characterizations of the nucleolus.

\begin{theorem}[\citet{kohlb:71}]
  \label{thm:ncpropI}
Let $\langle\, N, v\, \rangle$ be a TU-game and let be $\mathbf{x} \in \mathcal{I}(N,v)$. Then $\mathbf{x} = \nu(N,v) $ if, and only if, $\mathbf{x}$ has Property I of~Definition~\ref{def:nckoPr1}.
\end{theorem}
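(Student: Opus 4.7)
The plan is to handle the two directions separately, with necessity an almost immediate extension of the direct proof already given for Theorem~\ref{thm:propI}, and sufficiency requiring genuinely new work owing to the individual-rationality constraints encoded by $\mathcal{C}_{0}$.

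For the necessity direction, I assume $\mathbf{x}=\nu(N,v)$, fix $\psi\in\mathbb{R}$ with $\mathcal{D}(N,v;\psi,\mathbf{x})\neq\emptyset$, and take any $\mathbf{y}\in Y(\mathcal{C}(\psi))$. The crucial observation is that the inclusion $\mathcal{C}_{0}\subseteq\mathcal{C}(\psi)$ forces $y_{i}=y(\{i\})\ge 0$ for every $i\in N$ in addition to $y(N)=0$ and $y(S)\ge 0$ on $\mathcal{D}(N,v;\psi,\mathbf{x})$. Setting $\mathbf{z}_{\delta}:=\mathbf{x}+\delta\mathbf{y}$ for $\delta>0$, the vector $\mathbf{z}_{\delta}$ is efficient (by $y(N)=0$) and satisfies $z_{\delta,i}=x_{i}+\delta y_{i}\ge x_{i}\ge v(\{i\})$, so it lies in $\mathcal{I}(N,v)$, not merely in $\mathcal{I}^{*}(N,v)$. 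The remainder of the argument is verbatim the direct proof of Theorem~\ref{thm:propI}: pick $\delta^{*}>0$ small enough that $e^{v}(S,\mathbf{z}_{\delta^{*}})>e^{v}(T,\mathbf{z}_{\delta^{*}})$ for all $S\in\mathcal{D}(N,v;\psi,\mathbf{x})$ and $T\notin\mathcal{D}(N,v;\psi,\mathbf{x})$, use $e^{v}(S,\mathbf{z}_{\delta^{*}})=e^{v}(S,\mathbf{x})-\delta^{*}y(S)\le e^{v}(S,\mathbf{x})$ on $\mathcal{D}(N,v;\psi,\mathbf{x})$ to obtain $\theta(\mathbf{z}_{\delta^{*}})\le_{L}\theta(\mathbf{x})$, combine with the reverse inequality coming from $\mathbf{x}=\nu(N,v)$ to get equality, and invoke uniqueness of the nucleolus to conclude $\mathbf{z}_{\delta^{*}}=\mathbf{x}$, hence $\mathbf{y}=\mathbf{0}$ and in particular $y(S)=0$ for all $S\in\mathcal{D}(N,v;\psi,\mathbf{x})$.

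For sufficiency, I would argue by contradiction: assume $\mathbf{x}\in\mathcal{I}(N,v)$ satisfies Property~I but $\mathbf{x}\neq\nu(N,v)$, set $\mathbf{z}:=\nu(N,v)$ and $\mathbf{y}:=\mathbf{z}-\mathbf{x}$, and enumerate the distinct excess levels at $\mathbf{x}$ as $\psi_{1}>\psi_{2}>\cdots>\psi_{K}$, writing $\mathcal{D}_{k}:=\mathcal{D}(N,v;\psi_{k},\mathbf{x})$. Uniqueness of the nucleolus yields $\theta(\mathbf{z})<_{L}\theta(\mathbf{x})$, so there is a first level $k^{*}$ at which $\theta(\mathbf{z})$ strictly improves on $\theta(\mathbf{x})$. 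A level-by-level induction should give $y(S)=0$ for every $S\in\mathcal{D}_{k}$ with $k<k^{*}$, together with $y(S)>0$ for at least one $S\in\mathcal{D}_{k^{*}}$. The goal is then to turn $\mathbf{y}$ (or a suitable modification of it) into a direction $\tilde{\mathbf{y}}\in Y(\mathcal{C}(\psi_{k^{*}}))$ that still witnesses $\tilde{y}(S)>0$ on some $S\in\mathcal{D}_{k^{*}}$, thereby contradicting Property~I at level $\psi_{k^{*}}$.

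The main obstacle is exactly this conversion. While $y(N)=0$ and $y_{i}\ge 0$ hold automatically for those players $i$ with binding individual rationality $x_{i}=v(\{i\})$, the coordinates $y_{i}$ may very well be negative for players with slack $x_{i}>v(\{i\})$, so $\mathbf{y}$ need not belong to $Y(\mathcal{C}(\psi_{k^{*}}))$ and Property~I cannot be applied to it as it stands. My proposed remedy is a Farkas/LP-duality argument in the polyhedral cone $\{\mathbf{d}\in\mathbb{R}^{n}:d(N)=0,\;d_{i}\ge 0\;\forall i\in N,\;d(S)\ge 0\;\forall S\in\mathcal{D}_{k^{*}}\}$, exploiting the feasibility of $\mathbf{z}$ and the fact that the singletons $\{i\}$ with $x_{i}>v(\{i\})$ are inactive at the relevant level to produce a $\tilde{\mathbf{y}}$ with $\tilde{y}_{i}\ge 0$ throughout while keeping $\tilde{y}(S)>0$ on some $S\in\mathcal{D}_{k^{*}}$. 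Carrying out this modification rigorously, and ensuring it is compatible with the inductive equalities already established on $\mathcal{D}_{1},\dots,\mathcal{D}_{k^{*}-1}$, is the delicate heart of the proof; it corresponds precisely to the gap that the paper identifies in Nguyen's treatment, where the singleton constraints in $\mathcal{C}_{0}$ are silently collapsed into the rest of the collection.
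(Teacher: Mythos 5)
The paper itself offers no proof of Theorem~\ref{thm:ncpropI}: it is attributed to \citet{kohlb:71}, and the only argument actually written out in the text is the necessity half of the pre-nucleolus analogue, Theorem~\ref{thm:propI}. Your necessity direction is essentially that direct proof transported to the nucleolus, and the one genuinely new ingredient --- verifying that $\mathbf{z}_{\delta^{*}}=\mathbf{x}+\delta^{*}\mathbf{y}$ stays inside $\mathcal{I}(N,v)$ so that $\theta(\mathbf{x})\le_{L}\theta(\mathbf{z}_{\delta^{*}})$ may be invoked --- is handled correctly. Note, however, that with the paper's literal definition $\mathcal{C}_{0}=\{\{i\}\,\arrowvert\,i\in N\}$ (all singletons, not only the binding ones), the constraints $y_{i}\ge 0$ for every $i\in N$ together with $y(N)=0$ already force $\mathbf{y}=\mathbf{0}$, so $Y(\mathcal{C}(\psi))$ is the trivial cone and your entire $\mathbf{z}_{\delta}$ construction is redundant; worse, Property~I as literally stated is then satisfied by every imputation, so the sufficiency direction of the theorem would be false. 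You silently switch to Kohlberg's actual convention ($\mathcal{C}_{0}$ containing only the singletons with $x_{i}=v(\{i\})$) in your sufficiency discussion. You must fix one reading and use it consistently; only the binding-singletons reading makes both directions nontrivial and true.

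The genuine gap is sufficiency. You reduce it to producing a direction $\tilde{\mathbf{y}}\in Y(\mathcal{C}(\psi_{k^{*}}))$ with $\tilde{y}(S)>0$ for some $S\in\mathcal{D}_{k^{*}}$, starting from $\mathbf{y}=\nu(N,v)-\mathbf{x}$, and you explicitly defer the construction to an unspecified ``Farkas/LP-duality argument.'' That deferral is the entire content of the direction: the level-by-level induction giving $y(S)=0$ on $\mathcal{D}_{k}$ for $k<k^{*}$ itself needs justification (it requires a balancedness or spanning property of the earlier collections that you have not derived from Property~I alone), and the repair of the possibly negative coordinates $y_{i}$ for non-binding players must preserve both $\tilde{y}(N)=0$ and the strict inequality $\tilde{y}(S)>0$ on some $S\in\mathcal{D}_{k^{*}}$ simultaneously --- none of which is shown. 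The standard route (e.g.\ the source cited for Theorem~\ref{thm:kohlb}) avoids this modification entirely by working with the convex combination $\mathbf{x}+\delta\bigl(\nu(N,v)-\mathbf{x}\bigr)$, which lies in $\mathcal{I}(N,v)$ for $\delta\in(0,1]$, and by exploiting the equivalence between Property~I and weak balancedness of $\mathcal{C}_{0}\cup\mathcal{D}(N,v;\psi,\mathbf{x})$. As written, your proposal establishes necessity but leaves sufficiency unproved.
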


\begin{theorem}[\citet{kohlb:71}]
  \label{thm:ncpropII}
Let $\langle\, N, v\, \rangle$ be a TU-game and let be $\mathbf{x} \in \mathcal{I}(N,v)$. Then $\mathbf{x} = \nu(N,v) $ if, and only if, $\mathbf{x}$ has Property II of~Definition~\ref{def:nckoPr2}.
\end{theorem}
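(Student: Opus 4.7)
My plan is to derive Theorem~\ref{thm:ncpropII} from Theorem~\ref{thm:ncpropI} by establishing the equivalence of Property I (Definition~\ref{def:nckoPr1}) and Property II (Definition~\ref{def:nckoPr2}). Once this equivalence is secured for every admissible $\psi$, the chain $\mathbf{x}=\nu(N,v) \Leftrightarrow \text{Property I} \Leftrightarrow \text{Property II}$ yields the claimed characterization. I would fix $\psi \in \mathbb{R}$ with $\mathcal{D}:=\mathcal{D}(N,v;\psi,\mathbf{x}) \neq \emptyset$, set $\mathcal{C}:=\mathcal{C}_{0}\cup \mathcal{D}$, and treat the two implications separately.

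The implication II $\Rightarrow$ I is direct. Given a decomposition $\mathbf{1}_{N} = \sum_{S \in \mathcal{C}} w_{S}\,\mathbf{1}_{S}$ with $w_{S}\ge 0$ and $w_{S}>0$ for $S \in \mathcal{D}$, take any $\mathbf{y}$ satisfying $y(S)\ge 0$ for all $S\in\mathcal{C}$ and $y(N)=0$. Then $0 = y(N) = \sum_{S \in \mathcal{C}} w_{S}\,y(S)$ is a sum of non-negative terms; each term must vanish, and the strict positivity $w_{S}>0$ on $\mathcal{D}$ forces $y(S)=0$ for every $S \in \mathcal{D}$.

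For the harder implication I $\Rightarrow$ II, I would apply the Motzkin transposition theorem once per coalition $S_{0}\in\mathcal{D}$. Property I says that, for each such $S_{0}$, the system $y(S_{0})>0$, $y(S)\ge 0$ for $S\in\mathcal{C}$, $y(N)=0$ has no solution; Motzkin then furnishes a scalar $u_{S_{0}}>0$, non-negative numbers $v^{S_{0}}_{S}\ge 0$ for $S\in\mathcal{C}$, and a real $\lambda_{S_{0}}$ satisfying $u_{S_{0}}\mathbf{1}_{S_{0}} + \sum_{S\in\mathcal{C}} v^{S_{0}}_{S}\mathbf{1}_{S} + \lambda_{S_{0}}\mathbf{1}_{N} = \mathbf{0}$. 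A coordinate-wise sign comparison forces $\lambda_{S_{0}}<0$, so after rescaling one obtains a non-negative decomposition of $\mathbf{1}_{N}$ in which $\mathbf{1}_{S_{0}}$ carries a strictly positive coefficient. Averaging these $|\mathcal{D}|$ decompositions over $S_{0}\in\mathcal{D}$ yields a single non-negative representation $\mathbf{1}_{N}=\sum_{S\in\mathcal{C}} w_{S}\mathbf{1}_{S}$ in which every $S \in \mathcal{D}$ inherits a strictly positive coefficient, which is exactly Property II. The main obstacle is precisely this strict positivity: ordinary Farkas would deliver only non-negative weights, while Property II demands $w_{S}>0$ throughout $\mathcal{D}$. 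The averaging trick above is the standard workaround, but the sign analysis that pins down $\lambda_{S_{0}}<0$ must be verified carefully, and one must check that the convex combination does not inadvertently destroy any of the strict inequalities; the inclusion $\mathcal{C}_{0}\subseteq \mathcal{C}$ provides the flexibility to keep the singleton weights non-negative throughout.
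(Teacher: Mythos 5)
The paper itself offers no proof of Theorem~\ref{thm:ncpropII}: it is stated as a known result of \citet{kohlb:71}, and the only argument the paper actually writes out is for the necessity half of the pre-nucleolus Theorem~\ref{thm:propI}. Your route is therefore genuinely different in that it supplies an argument at all: you reduce Theorem~\ref{thm:ncpropII} to Theorem~\ref{thm:ncpropI} by proving the equivalence of Property~I and Property~II for each fixed $\psi$, which is the standard way this pair of characterizations is linked in the literature. Your easy direction is correct: from $\mathbf{1}_{N}=\sum_{S\in\mathcal{C}}w_{S}\mathbf{1}_{S}$ and $0=y(N)=\sum_{S\in\mathcal{C}}w_{S}\,y(S)$ with all summands non-negative, every summand vanishes and $w_{S}>0$ on $\mathcal{D}$ forces $y(S)=0$ there. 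Your hard direction is also sound: Property~I makes the system $y(S_{0})>0$, $y(S)\ge 0$ for $S\in\mathcal{C}$, $y(N)=0$ infeasible for each $S_{0}\in\mathcal{D}$; Motzkin gives $u_{S_{0}}\mathbf{1}_{S_{0}}+\sum_{S}v^{S_{0}}_{S}\mathbf{1}_{S}+\lambda_{S_{0}}\mathbf{1}_{N}=\mathbf{0}$ with $u_{S_{0}}>0$, and since the first two terms are coordinatewise non-negative and strictly positive on the non-empty set $S_{0}$, indeed $\lambda_{S_{0}}<0$; rescaling and averaging over $S_{0}\in\mathcal{D}$ preserves the decomposition of $\mathbf{1}_{N}$ and keeps every coefficient on $\mathcal{D}$ strictly positive, since an average of non-negative numbers containing one positive term is positive. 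So your worry about the convex combination destroying strictness is unfounded, and the argument closes.

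Two caveats. First, your proof is conditional on Theorem~\ref{thm:ncpropI}, which the paper also leaves unproven, so you have shifted rather than eliminated the appeal to Kohlberg; that is acceptable given how the paper presents these results, but worth saying explicitly. Second, note that your equivalence argument is indifferent to what $\mathcal{C}_{0}$ is, which is fortunate, because the paper's $\mathcal{C}_{0}:=\{\{i\}\mid i\in N\}$ (all singletons, rather than only those with $x_{i}=v(\{i\})$) makes $Y(\mathcal{C}(\psi))=\{\mathbf{0}\}$ and trivializes both properties; the correct statement of Kohlberg's criterion restricts $\mathcal{C}_{0}$ to the binding individual-rationality constraints, and your proof survives that correction unchanged.
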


For completeness, we introduce now the transcription of Kohlberg's properties into the language of~\cite{Nguyen:17}. Notice that also in this revised version those properties are again misquoted as well the notion of $T_{0}$-balancedness. Therefore, the correct definition of Property I in the notation of~\citeauthor{Nguyen:17} states that

\begin{definition}
  \label{def:koPr1a}
  A collection of sets $\mathcal{C} = (Q_{0},Q_{1},\ldots,Q_{p})$ has Property I if for all $k = 1,\ldots,p$
  \begin{equation*}
    Y(\cup_{j=0}^{k}\,Q_{j})=\{\mathbf{y} \in \mathbb{R}^{n} \,\arrowvert\, y(S) \ge 0 \;\forall S \in \cup_{j=0}^{k}\,Q_{j}, y(N) = 0\},
  \end{equation*}
 implies $y(S) = 0, \forall S \in \cup_{j=1}^{k}\,Q_{j}$. 
\end{definition}

Whereas the definition of Property II states that

\begin{definition}
 \label{def:koPr2b}
  A collection of sets $\mathcal{C} = (Q_{0},Q_{1},\ldots,Q_{p})$ has Property II if for all $k = 1,\ldots,p$ there exists $\omega \in 2^{|\mathcal{C}|}$ with $\omega \ge \mathbf{0}$ s.t. 
  \begin{equation*}
      e(N) = \sum_{S \in \cup_{j=0}^{k}\, Q_{j}} \omega_{S}e(S) \qquad\text{and}\qquad \omega_{S} > 0 \quad\forall\; S \in \bigcup_{j=1}^{k}\,Q_{j}.
  \end{equation*}
\end{definition}

Compare this with the Algorithm~\ref{alg:ngu02} implemented by~\citet[Algorithm 2; p.~7]{Nguyen:17}. Obviously, the test condition $(\cup_{j=1}^{k}\,T_{j})$ is wrong. Apart from the unification of a set of characteristic vectors with that of a set of coalitions, it is evident the algorithm cannot work correctly and makes incorrect selections. 

\footnotesize
\begin{algorithm}
\DontPrintSemicolon
\SetAlgoLined
\KwData{Game $\langle\,N, v\,\rangle$, imputation solution $\mathbf{x}$.}
\KwResult{Conclude if $\mathbf{x}$ is the nucleolus or not.}
\Begin{
  \nlset{1} Initialization: Set $H_{0}=\{e_{N},\emptyset\},T_{0}=\{\{i\},\, i=1,\ldots,n\,: x_{i}=v(\{i\})\}$, and \; $k=1$\;
 \While{$rank(H_{k-1})<n$}{
  \nlset{2} Find $T_{k} = \argmax_{S \notin span(H_{k-1})}\,\{v(S)-x(S)\}$;\;
  \If{$(\cup_{j=1}^{k}\,T_{j})$ is $T_{0}$-balanced}{\nlset{3} Set $H_{k}=H_{k-1} \cup T_{k},\, k=k+1$ and continue;\; \lElse{\;\nlset{4} Stop the algorithm and conclude that $\mathbf{x}$ {\bfseries is not} the nucleolus.}}
        }
 }{\nlset{5} Conclude that $\mathbf{x}$ is the nucleolus.}
 \caption{Simplified Kohlberg Algorithm for verifying if a solution is the nucleolus of a cooperative game~(\citet[Algorithm 2; p.~7]{Nguyen:17})}
 \label{alg:ngu02}
 \end{algorithm}
\normalsize
This approach is not appropriate to select, for instance, a pre-nucleolus if $T_{0}=\{\emptyset\}$ or a nucleolus otherwise. To get a correct approach, choose first $\psi \in \mathbb{R}$ s.t. $\mathcal{C}:=\mathcal{D}(N,v;\psi,\mathbf{x}) \neq \emptyset$, and define 

\begin{equation}
  \label{eq:alpsubmax}
  \tilde{\epsilon}:= \psi - max_{S \in 2^{N}\backslash\mathcal{C}}\,\{v(S)-x(S)\}.
\end{equation}
Then define the collection of coalitions, denoted as $\tilde{\mathcal{D}}(N,v;\psi,\tilde{\epsilon},\mathbf{x})$, having excess larger than or equal to $\psi-\tilde{\epsilon} \in \mathbb{R}$ at the pre-imputation $\mathbf{x} \in \mathcal{I}^{*}(N,v)$ such that the associated characteristic vectors are not in the span of a collection of characteristic vectors induced from $\mathcal{B}$ satisfying $\mathcal{B} \subseteq \mathcal{D}(N,v;\psi,\mathbf{x})$ or more formally given by Definition~\ref{def:moddset} whereas $\mathcal{B}$ has a particular labeling.   

\begin{definition}
  \label{def:moddset}
For every $\mathbf{x} \in \mathcal{I}^{*}(N,v)$, and $\psi \in \mathbb{R}$ with $\tilde{\epsilon}$ as by Formula~\eqref{eq:alpsubmax}, choose an arbitrary set $\hat{\mathcal{D}}(N,v;\psi,\mathbf{x}) \subseteq \mathcal{D}(N,v;\psi,\mathbf{x}) $, then we define the set
\begin{equation}
  \label{eq:moddset}
  \tilde{\mathcal{D}}(N,v;\psi,\tilde{\epsilon},\mathbf{x}):=\big\{S \in \mathcal{D}(N,v;\psi-\tilde{\epsilon},\mathbf{x})\backslash\hat{\mathcal{D}}(N,v;\psi,\mathbf{x})\,\arrowvert\, \mathbf{1}_{S} \not\in \text{span}(\{\mathbf{1}_{T}\,\arrowvert\, T \in \hat{\mathcal{D}}(N,v;\psi,\mathbf{x})\})\big\}.
\end{equation}
\end{definition}

Next we introduce a recursive definition of the set $\hat{\mathcal{D}}(N,v;\psi-\tilde{\epsilon},\mathbf{x})$.  

\begin{definition}
  \label{def:moddset2}
For every $\mathbf{x} \in \mathcal{I}^{*}(N,v)$, and $\psi \in \mathbb{R}$ with $\tilde{\epsilon}$ as by Formula~\eqref{eq:alpsubmax}, and define the set
\begin{equation}
  \label{eq:modset2}
  \hat{\mathcal{D}}(N,v;\psi-\tilde{\epsilon},\mathbf{x}):= \hat{\mathcal{D}}(N,v;\psi,\mathbf{x}) \cup \,\tilde{\mathcal{D}}(N,v;\psi,\tilde{\epsilon},\mathbf{x}).
\end{equation}
\end{definition}

A modified characterization of the pre-nucleolus in terms of balanced collections $\hat{\mathcal{D}}(N,v;\psi,\mathbf{x})$ must then be given by 

\begin{chara}
  \label{thm:modkohlb}
 Let $\langle\, N, v\, \rangle$ be a TU-game and let be $\mathbf{x} \in \mathcal{I}^{*}(N,v)$. Then $\mathbf{x} = \nu^{*}(N,v) $ if, and only if, for every $\psi \in \mathbb{R}$ s.t. $\hat{\mathcal{D}}(N,v;\psi-\tilde{\epsilon},\mathbf{x})\neq \emptyset$ implies that $\hat{\mathcal{D}}(N,v;\psi-\tilde{\epsilon},\mathbf{x})$ is a balanced collection over $N$. 
\end{chara}
\begin{remark}[{\bfseries{Identified Problems}}]
\label{rem:pnkohl1}
\hfill
\begin{labeling}[:]{Sufficiency}
 \item[Sufficiency] If $\mathbf{x}$ is the nucleolus, one has to show that $\mathbf{z}=\mathbf{x}$ is satisfied. For doing so, we have to lexicographically compare $\theta(\mathbf{z}) \in \mathbb{R}^{2^{N}-1}$ with $\theta(\mathbf{x}) \in \mathbb{R}^{2^{N}-1}$. This can be done by Kohlberg's approach, because we have different values of excesses at $\mathbf{x}$ s.t. $\psi_{1} > \psi_{2}, \ldots > \psi_{p}$, then we know that $\mathcal{D}(N,v;\psi_{p},\mathbf{x}) = 2^{N}\backslash\{\emptyset\}$ is related to $\theta(\mathbf{x})$. A similar argument applies to $\mathbf{z}$ s.t. $\mathcal{D}(N,v;\psi_{r},\mathbf{z}) = 2^{N}\backslash\{\emptyset\}$ is related to $\theta(\mathbf{z})$ with $r \le p$. However, due the elimination of coalitions we might have $\hat{\mathcal{D}}(N,v;\psi_{p}-\tilde{\epsilon},\mathbf{x}) \subset \mathcal{D}(N,v;\psi_{p},\mathbf{x})$. Hence, the collection of sets $\mathcal{D}(N,v;\psi_{p},\mathbf{x})$ is related to $\theta(\mathbf{x})$, whereas the set $\hat{\mathcal{D}}(N,v;\psi_{p}-\tilde{\epsilon},\mathbf{x})$ is not. Even when we introduce a modified vector of excess values in descending order denoted by $\tilde{\theta}(\mathbf{x})$, we might not be able to lexicographically compare $\tilde{\theta}(\mathbf{x})$ with $\tilde{\theta}(\mathbf{z})$, because of a different cardinality of vectors or in case that we can, the order has completely changed w.r.t. $\theta$. Hence, we lose important information related to the characterization of the pre-nucleolus.  
 \item[Necessity] Notice that it holds either 
   \begin{equation*}
     \begin{split}
       (i) & \quad\mathcal{D}(N,v;\psi,\mathbf{x}) \subseteq \hat{\mathcal{D}}(N,v;\psi-\tilde{\epsilon},\mathbf{x}) \subseteq \mathcal{D}(N,v;\psi-\tilde{\epsilon},\mathbf{x}),\\
    \text{or}\quad (ii) &\quad \mathcal{D}(N,v;\psi,\mathbf{x}) \not\subseteq \hat{\mathcal{D}}(N,v;\psi-\tilde{\epsilon},\mathbf{x}) \subseteq \mathcal{D}(N,v;\psi-\tilde{\epsilon},\mathbf{x}),\\
    \text{or}\quad (iii) &\quad \hat{\mathcal{D}}(N,v;\psi-\tilde{\epsilon},\mathbf{x}) \subset \mathcal{D}(N,v;\psi,\mathbf{x}). 
     \end{split}
   \end{equation*}
However, it is always true that $\hat{\mathcal{D}}(N,v;\psi-\tilde{\epsilon},\mathbf{x})$ is a balanced collection over $N$ whenever $\mathcal{D}(N,v;\psi,\mathbf{x})$ and $\mathcal{D}(N,v;\psi-\tilde{\epsilon},\mathbf{x})$ are balanced collections? This has not been proven by~\citet{Nguyen:17}. 

If the collections $\mathcal{D}(N,v;\psi,\mathbf{x})$ and $\mathcal{D}(N,v;\psi-\tilde{\epsilon},\mathbf{x})$ induce the same rank, then the statement $(i)$ holds. In contrast, if the ranks are different, then probably not. What is with cases $(ii)$ and $(iii)$? Nevertheless, we might not be able to lexicographically compare $\tilde{\theta}(\mathbf{x})$ with $\tilde{\theta}(\mathbf{z}_{\delta^{*}})$, though we have some evidence that necessity might be work.  \hfill$\Diamond$
 \end{labeling}
\end{remark}

In contrast to the invalid procedure applied by~\citet{Nguyen:17} to give proofs for his Theorem 2 and 5, we realize from Remark~\ref{rem:pnkohl1} that it is not enough to simply transcribe logical flawed arguments from the literature that can be found, for instance, in~\citet[Them.~5.2.6~on~pp.~87-88]{pel_sud:07},~\citet[Them.~19.5~on pp.~275-276]{pet:08},~or~\citet[Them.~19.4~on p.~348]{pet:15}, to conduct a proof for Characterization~\ref{thm:modkohlb}. Such a proof would certainly be logical incorrect. From this, we infer that more sophisticated and elaborated arguments are needed as those given by these authors to clarify if Characterization~\ref{thm:modkohlb} can be transformed into a theorem.

The unproven modified Characterization~\ref{thm:modkohlb} of the pre-nucleolus can be transcribed into pseudo-code as represented, for instance, through~Algorithm~\ref{alg:modpnkohl1} below   

\footnotesize
\begin{algorithm}
\DontPrintSemicolon
\SetAlgoLined
\KwData{Game $\langle\,N, v\,\rangle$, pre-imputation $\mathbf{x} \in \mathcal{I}^{*}(N,v)$.}
\KwResult{Conclude if $\mathbf{x}$ is the pre-nucleolus.}
\Begin{
  \nlset{1} Initialization: Choose pre-imputation $\mathbf{x}$,\; find $\psi_{0} = max_{S \in 2^{N}\backslash\{\emptyset\}}\,\{v(S)-x(S)\}$ to set $ \hat{\mathcal{D}}_{0}=\hat{\mathcal{D}}(N,v;\psi_{0},\mathbf{x}) = \mathcal{D}(N,v;\psi_{0},\mathbf{x})$,\; and set $k=1$.\;
 \While{$rank(\{\mathbf{1}_{S}\,\arrowvert\, S \in \hat{\mathcal{D}}_{k-1}\})<n$}{
  \If{$\hat{\mathcal{D}}_{k-1}$ is a balanced collection over $N$}{\nlset{2} Find $\psi_{k} = max_{S \in 2^{N}\backslash\mathcal{D}(N,v;\psi_{k-1},\mathbf{x})}\,\{v(S)-x(S)\}$; and $\tilde{\epsilon}_{k}$ as by Formula~\eqref{eq:alpsubmax}. \;Set $\hat{\mathcal{D}}_{k} = \hat{\mathcal{D}}(N,v;\psi_{k}-\tilde{\epsilon}_{k},\mathbf{x}),\, k=k+1$ and continue;\; \lElse{\;\nlset{3} Stop the algorithm and conclude that $\mathbf{x}$ is not the pre-nucleolus.}}
        }
 }{\nlset{4} Conclude that $\mathbf{x}$ is the pre-nucleolus.}
 \caption{~Modified Algorithm for verifying if a pre-imputation is the pre-nucleolus}
 \label{alg:modpnkohl1}
  \end{algorithm}
\normalsize

\begin{remark}
 \label{rem:pnkohl2}
  We were talking about earlier that we have found some empirical evidence that necessity is applicable. However, the author failed to give evidence that his erroneous Algorithm~\ref{alg:ngu02} is under the necessary condition more efficient than the Kohlberg Algorithm~\ref{alg:pnkohl1}. Thus, one has to estimate if the computational complexity is lower while checking for each $S \in \mathcal{D}(N,v;\psi_{k}-\tilde{\epsilon},\mathbf{x})\backslash\hat{\mathcal{D}}(N,v;\psi,\mathbf{x})$ that its induced characteristic vector $\mathbf{1}_{S}$ is not in $\text{span}(\{\mathbf{1}_{T}\,\arrowvert\, T \in \hat{\mathcal{D}}(N,v;\psi_{k},\mathbf{x})\})$ rather than checking that $\mathcal{D}(N,v;\psi_{k}-\tilde{\epsilon},\mathbf{x})$ is a balanced collection over $N$ for all $k = 1,\ldots,p$. Or to put it differently, checking the span condition is not more costly than solving larger LPs.  \hfill$\Diamond$
\end{remark}

\section{Concluding Remarks}
\label{sec:remKohlb}
 In contrast to that what is claimed by~\citet{Nguyen:17}, we were not able to observe any improvements in his revised version compared to~\citet{Nguyen:16b}. The author has added to the still persisting severe deficiencies of the manuscript a bunch of new ones, so that the reported results and algorithms are still invalid, and cannot be granted to have any connection with Kohlberg as the author did. In particular, the proofs of his main Theorems 2 and 5 are again logically flawed due to an incorrect application of the proof by contradiction. Even though the author changed the test condition within the supposed algorithms, the imposed new test condition is again flawed, because of a misuse of the Kohlberg properties. We have to restate that our objections presented in~\citet{mei:17} are valid with some modifications.

To summarize,~\citet{Nguyen:17} failed to prove that only irrelevant coalitions will be deleted from the simplifying set of the Kohlberg criteria with the consequence that this collection of sets inherits the essential properties of the collection derived by Kohlberg's criteria. As a consequence, the author failed to work out modified necessary and sufficient conditions of the nucleolus. In this respect, we provide a possible re-characterization of the pre-nucleolus in terms of modifying sets of the Kohlberg criteria, and discuss in some details the expected problems in order to get a clear, consistent and concise proof.

\pagestyle{scrheadings} \chead{\empty}  
\footnotesize
\bibliography{modKohlberg}

\end{document}